\newcommand{\chapterauthor}[1]{%
  {\parindent0pt\vspace*{-25pt}%
  \linespread{1.1}\large\scshape#1%
  \par\nobreak\vspace*{35pt}}
  \@afterheading%
}
\numberwithin{equation}{section}
\newtheorem{theorem}{Theorem}[section]
\newtheorem*{theorem*}{Theorem}
\newtheorem{corollary}[theorem]{Corollary}
\newtheorem{lemma}[theorem]{Lemma}
\theoremstyle{definition}
\newtheorem{remark}[theorem]{Remark}
\newcites{intro,urn,bp,ce,con}%
{References,
References,
References,
References,
References}
\renewcommand{\d}{{\rm d}}
\newcommand{\R}{\right}
\newcommand{\F}{\left}
\newcommand{\E}{\mathbb{E}}
\newcommand{\e}{\mathrm{e}}
\newcommand\conindis{\stackrel{\mathclap{\mathrm{d}}}{\rightarrow}}
\title{A moment approach to the law of large numbers for supercritical branching Markov processes}
\author{Christopher B. C. Dean\thanks{Department of Statistics, University of Warwick, Coventry, CV4 7AL, UK. Email: \texttt{\{Christopher.B.C.Dean\}, \{Emma.Horton\}@warwick.ac.uk}} \and János Engländer\thanks{Department of Mathematics, University of Colorado, Boulder, CO 80309, USA. Email: \texttt{janos.englander@colorado.edu}} \and Emma Horton$^*$}
\date{}
\begin{document}
\maketitle

\begin{abstract}
We offer a new proof of the classical law of large numbers for a general class of branching Markov processes based on the asymptotic behaviour of the moments developed in \cite{bmoments, gonzalez2022erratum}. Moreover, we show that the law of the limiting random variable, that is the almost sure limit of the classical additive martingale, is completely determined by its moments. 

\bigskip

\noindent {\bf Keywords:} Branching Markov process, moments, law of large numbers.

\noindent {\bf MSC 2020:} 60J80, 60J68, 60F05.
\end{abstract}

\section{Introduction}
 Understanding the asymptotic behaviour of moments has been a central topic of interest in the study of branching processes \cite{bmoments, meatman, Klenke, bmoments2, PSDBP-moments, durhamCMJ}. Moment asymptotics provide a powerful tool for analysing these processes, often revealing information about genealogical structure \cite{felix_genealogies, M2few}, scaling limits \cite{felix-CRT}, and long-term dynamics \cite{Yaglom, Pedro_Yaglom, bCLT}. The focus of this article is to show that  moment asymptotics can be used to obtain a classical law of large numbers (LLN) for a general class of branching Markov processes. Moreover, we show that the limiting random variable is completely determined by its moments. 
 
 Similar to moment asymptotics, LLNs have been well studied for branching particle systems, \cite{EnglanderHarrisKyprianou2010, NTEbook, Marguet2019} with the proofs often utilising spine decompositions and martingale convergence techniques. In this paper, we present an alternative approach that exploits the moment asymptotics developed in \cite{bmoments}.

Before we introduce the class of processes which we will work with, let us introduce some general notation. Throughout, we let $E$ be a Polish space and $\dagger \notin E$ a cemetery (absorbing) point. We let $B(E)$ denote the space of real-valued bounded functions, $B^+(E)$ denote the space of non-negative real bounded functions and $B_1(E)$, (resp. $B_1^+(E)$) denote those functions in $B(E)$, (resp. $B^+(E)$) that are bounded by unity. We extend the definition of $f \in B(E)$ to $E \cup \{\dagger\}$ by setting $f(\dagger) = 0$. 

We let $M(E)$ denote the space of atomic measure with non-negative integer total mass on $E$. For a measure $\mu$ and a function $f \in B(E)$, we write 
\[
  \mu[f] := \int_E f(x)\mu({\rm d}x).
\] 
In the event that $\mu$ has a density with respect to Lebesgue measure, say $\nu$, we slightly abuse notation and write $\nu[f] = \int_E f(x)\nu(x) dx$.

\subsection{Branching Markov processes}\label{subsec:BMP}
We consider a measure-valued stochastic process $X =(X_t, t\geq0)$ given by
\[
X_t := \sum_{i = 1}^{N_t}\delta_{x_i(t)}, \qquad t \ge 0,
\]
whose atoms $\{x_i(t) : i = 1, \dots, N_t\}$ evolve in $E\cup \{\dagger\}$ according to the following dynamics. From an initial position $x \in E$, particles evolve independently in $E$ according to a Markov process $\xi$ with semigroup $(\mathtt P_t)_{t \ge 0}$. When at $y \in E$, particles branch at rate $\gamma(y)$, where $\gamma \in B^+(E)$, at which point the particle gives birth according to the point process $(\mathcal Z, \mathcal P_y)$, where 
\[
 \mathcal Z = \sum_{i = 1}^N \delta_{x_i}.
\] 
We will also often use the notation $\mathcal E_y$ for the expectation associated to $\mathcal P_y$. We also let $\mathbb P_{\mu}$ denote the law of $X$ when initiated from $\mu \in M(E)$ and we write $\mathbb E_\mu$ for the associated expectation operator. 

The process $X$ is uniquely characterised via the non-linear semigroup
\[
  {\rm e}^{-\mathtt v_t[f](x)} := \mathbb E_{\delta_x}[{\rm e}^{-X_t[f]}], \quad f \in B^+(E), t \ge 0, x \in E,
\]
which satisfies the evolution equation
\[
  {\rm e}^{-\mathtt v_t[f](x)} = \mathtt P_t[{\rm e}^{-f}](x) + \int_0^t \mathtt P_s[\mathtt G[{\rm e}^{-\mathtt v_{t-s}[f]}]](x){\rm d}s.
\]
Here 
\[
  \mathtt G[f](x) := \gamma(x)\mathcal E_x\left[\prod_{i = 1}^N f(x_i) - f(x) \right],
\]
is the (non-linear) branching mechanism. We refer to $X$ as a $(\mathtt P, \mathtt G)$-branching Markov process (BMP).

\subsection{Assumptions and main result}
One object that is central to understanding the asymptotic behaviour of branching Markov processes, $X$, is its linear expectation semigroup,
\begin{equation}\label{eq:linear}
  \psi_t[f](x) := \E_{\delta_x}\left[X_t[f]\right], \quad t \ge 0,\, x \in E, \, f \in B(E).
\end{equation} 
Indeed, it is common that the semigroup $\psi$ exhibits the following Perron Frobenius-type behaviour.
\begin{enumerate}[label=(H1),ref=(H1)]
\item \label{H1} There exists $\lambda > 0$, a bounded function $\varphi \in B(E)$ and a probability measure with density $\tilde\varphi$ on $E$, such that, \(\tilde\varphi[\varphi]=1\),
\[
  \psi_t[\varphi](x) = {\rm e}^{\lambda t}\varphi(x), \qquad \tilde\varphi[\psi_t[f]] = \tilde\varphi[f],
\]
and
\begin{equation*}
  \sup_{x \in E, f \in B_1(E)}\big| {\rm e}^{-\lambda t}\varphi(x)^{-1}\psi_t[f](x) - \tilde\varphi[f]\big|
  \to 0, \quad \text{as } t \to \infty.
\end{equation*}
\end{enumerate}
Here $\lambda$ describes the leading order growth rate of the number of particles in the system and, in particular, assumes that the system is {\it supercritical}; $\tilde\varphi$ describes the stationary behaviour of the particles in the system; $\varphi$ can be thought of as an importance function that indicates how favourable each $x \in E$ is for survival of the process when initiated from $x$.

We also introduce an assumption on the moments of the offspring distribution that will not be used in our main result but is necessary in order to present existing results. 

\begin{enumerate}[label=(H2$k$),ref=(H2k)]
\item \label{H2k} Fix $k \ge 1$. We assume that 
\[
  \sup_{x \in E}\mathcal E_x[\mathcal Z[1]^k] < \infty.
\]
\end{enumerate}

This above assumption, along with the assumption on the branching rate, ensures that the $k$-th moment of the process is bounded, uniformly in $x$. Moreover, defining $\psi_t^{(k)}[f](x) := \mathbb E_{\delta_x}[X_t[f]^k]$, $t \ge 0$, $f \in B(E)$ and $x \in E$, the following result was obtained in \cite{bmoments, gonzalez2022erratum}. 
\begin{theorem*}{\cite[Theorem 2]{bmoments, gonzalez2022erratum}}
 Fix $k \ge 2$ and assume that \ref{H1} and \ref{H2k} hold. For $\ell \le k$ and $t \ge 0$, define 
 \begin{equation}\label{eq:moments}
 \Delta_t^{(\ell)} = \sup_{x\in E, f\in B^+_1(E)} \left|{\rm e}^{-\ell \lambda t} \psi_t^{(\ell)}[f](x)
- \,\ell!(\tilde{\varphi}[f])^\ell \varphi(x)L_\ell(x)\right|,
 \end{equation}
where, ${\color{black}L_1 (x)= 1}$ and we define iteratively for $k \ge 2$,
\[
L_k(x)
=\int_0^\infty{\rm e}^{-\lambda ks}{\color{black}\varphi(x)^{-1}} \psi_s\Bigg[ {\gamma}\mathcal{E}_{\cdot}\bigg[ \sum_{[k_1, \dots, k_N]_k^{2}}\prod_{\substack{j = 1 \\ j: k_j > 0}}^N {\color{black}\varphi(x_j)} L_{k_j}(x_j)\bigg]\Bigg](x)\d s, 
\]
with $[k_1, \dots, k_N]_k^2$ denoting the set of all non-negative $N$-tuples $(k_1, \dots, k_N)$ such that $\sum_{i = 1}^N k_i = k$ and at least two of the $k_i$ are strictly positive.
{\color{black}Then, for all $\ell\leq k$
\begin{equation}
\sup_{t\geq0} \Delta_t^{(\ell)}<\infty
\text{ and }
\lim_{t\rightarrow\infty}
\Delta_t^{(\ell)}
=0.
\end{equation}}
\end{theorem*}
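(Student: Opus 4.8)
The plan is to prove the statement by induction on $\ell$. The base case $\ell=1$ is immediate from \ref{H1}: since $\psi^{(1)}=\psi$ and $L_1\equiv 1$,
\[
\Delta_t^{(1)}=\sup_{x\in E,\,f\in B_1^+(E)}\big|\mathrm{e}^{-\lambda t}\psi_t[f](x)-\varphi(x)\tilde\varphi[f]\big|\le\|\varphi\|_\infty\sup_{x\in E,\,f\in B_1(E)}\big|\mathrm{e}^{-\lambda t}\varphi(x)^{-1}\psi_t[f](x)-\tilde\varphi[f]\big|,
\]
which is bounded in $t$ and tends to $0$. For the inductive step, fix $2\le\ell\le k$ and assume the conclusion — together with the statement that each $L_{\ell'}$, $\ell'<\ell$, is a well-defined, bounded, non-negative function — for all $\ell'<\ell$. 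The engine of the proof is the \emph{moment recursion}: $\psi^{(\ell)}_t[f]$ satisfies
\begin{equation}\label{eq:momrec}
\psi_t^{(\ell)}[f](x)=\psi_t[f^\ell](x)+\int_0^t\psi_s\!\left[\gamma\,\mathcal{E}_\cdot\!\left[\sum_{[k_1,\dots,k_N]_\ell^2}\binom{\ell}{k_1,\dots,k_N}\prod_{\substack{j=1\\ j:k_j>0}}^N\psi_{t-s}^{(k_j)}[f](x_j)\right]\right]\!(x)\,\mathrm{d}s,
\end{equation}
where $f^\ell$ denotes $y\mapsto f(y)^\ell$; crucially, the right-hand side involves only the linear semigroup and moments of order strictly less than $\ell$.

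To obtain \eqref{eq:momrec} I would decompose on the first branch event of the particle issued from $x$: either no branching has occurred by time $t$ (exponential weight $\exp(-\int_0^t\gamma(\xi_u)\,\mathrm{d}u)$, particle at $\xi_t$) or the first branch is at time $s<t$ and position $\xi_s$, spawning offspring $x_1,\dots,x_N\sim\mathcal{P}_{\xi_s}$ each carrying an independent copy of $X$. Expanding $X_t[f]^\ell=(\sum_i X_{t-s}^{(i)}[f])^\ell$ by the multinomial theorem and averaging the conditionally independent subtrees shows that $\psi^{(\ell)}[f]$ solves a linear inhomogeneous equation $v=\Phi_{f^\ell}+\mathcal{K}v+S$, where $\Phi_{f^\ell}$ is the no-branch term, $\mathcal{K}$ is the linear branching operator underlying $\psi$, and the source $S$ collects exactly the multi-indices with at least two positive entries; solving by variation of constants ($(I-\mathcal{K})^{-1}$ exists via a Neumann series convergent on each finite horizon, and $(I-\mathcal{K})^{-1}\Phi_{f^\ell}=\psi[f^\ell]$) gives \eqref{eq:momrec}. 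Equivalently, \eqref{eq:momrec} can be read off by differentiating the nonlinear semigroup identity $\ell$ times in $\theta$ at $\theta=0$ via Faà di Bruno, as carried out in \cite{bmoments}. Assumption \ref{H2k} (with $\gamma\in B^+(E)$) is exactly what makes every term finite and legitimises the Fubini exchanges: for fixed $N$ the set $[k_1,\dots,k_N]_\ell^2$ has $O(N^\ell)$ elements and each product has at most $\ell$ factors, so the inner $\mathcal{E}_\cdot$-expectation is dominated by a constant times $\sup_{y\in E}\mathcal{E}_y[\mathcal{Z}[1]^\ell]$, finite for $\ell\le k$ by \ref{H2k}.

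For the inductive step I would multiply \eqref{eq:momrec} by $\mathrm{e}^{-\ell\lambda t}$. The first term satisfies $\mathrm{e}^{-\ell\lambda t}\psi_t[f^\ell](x)\le C\|\varphi\|_\infty\mathrm{e}^{-(\ell-1)\lambda t}$ by \ref{H1} (as $f^\ell\in B_1^+(E)$), hence is bounded and $o(1)$. In the integral, since $\sum_{j:k_j>0}k_j=\ell$ the prefactor $\mathrm{e}^{-\ell\lambda t}$ distributes as $\mathrm{e}^{-\ell\lambda s}\prod_{j:k_j>0}\mathrm{e}^{-k_j\lambda(t-s)}$, so each $\psi_{t-s}^{(k_j)}[f](x_j)$ occurs in the normalised form $b_j:=\mathrm{e}^{-k_j\lambda(t-s)}\psi_{t-s}^{(k_j)}[f](x_j)$, for which the inductive hypothesis gives $|b_j-a_j|\le\Delta_{t-s}^{(k_j)}$ \emph{uniformly in $x_j$ and $f$}, with $a_j:=k_j!(\tilde\varphi[f])^{k_j}\varphi(x_j)L_{k_j}(x_j)$. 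Replacing each $b_j$ by $a_j$ and using the identity $\binom{\ell}{k_1,\dots,k_N}\prod_{j:k_j>0}k_j!=\ell!$ converts the main part of the integral into $\ell!(\tilde\varphi[f])^\ell\int_0^t\mathrm{e}^{-\ell\lambda s}\psi_s[\gamma\,\mathcal{E}_\cdot[\sum_{[k_1,\dots,k_N]_\ell^2}\prod_{j:k_j>0}\varphi(x_j)L_{k_j}(x_j)]](x)\,\mathrm{d}s$, which by the defining recursion for $L_\ell$ equals $\ell!(\tilde\varphi[f])^\ell\varphi(x)L_\ell(x)$ minus the tail of that integral over $[t,\infty)$; since the function inside $\psi_s$ is bounded (the $O(N^\ell)$-count) and $\psi_s[h](x)\le C\|h\|_\infty\varphi(x)\mathrm{e}^{\lambda s}$ for bounded non-negative $h$ by \ref{H1}, the tail is $O(\mathrm{e}^{-(\ell-1)\lambda t})$ and, in passing, $L_\ell$ is well-defined, bounded and non-negative. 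The error made by the replacement is controlled by telescoping, $|\prod_{j:k_j>0}b_j-\prod_{j:k_j>0}a_j|\le\ell M^{\ell-1}\max_{k<\ell}\Delta_{t-s}^{(k)}$ with $M$ independent of $t,s,f$ and the offspring positions — this is where the uniformity in $x$ of the inductive hypothesis is essential — so it contributes at most $C\|\varphi\|_\infty\int_0^t\mathrm{e}^{-(\ell-1)\lambda s}\max_{k<\ell}\Delta_{t-s}^{(k)}\,\mathrm{d}s$; splitting this at $s=t/2$ and using $\sup_u\max_{k<\ell}\Delta_u^{(k)}<\infty$ on $[t/2,t]$ and $\max_{k<\ell}\Delta_{t-s}^{(k)}\le\sup_{u\ge t/2}\max_{k<\ell}\Delta_u^{(k)}\to0$ on $[0,t/2]$ shows it is bounded and $o(1)$. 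Summing the three contributions gives $\sup_t\Delta_t^{(\ell)}<\infty$ and $\Delta_t^{(\ell)}\to0$, closing the induction.

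I expect the main obstacle to be the clean derivation of \eqref{eq:momrec} — the first-branch decomposition, the Fubini interchanges, the variation-of-constants identity and its uniqueness — together with the combinatorial bookkeeping of the index sets $[k_1,\dots,k_N]_\ell^2$, in particular obtaining the finiteness of the inner offspring expectation and, in the telescoping estimate, a bound uniform in the offspring positions via $\sup_{y\in E}\mathcal{E}_y[\mathcal{Z}[1]^\ell]$. Once \eqref{eq:momrec} and the uniform-in-$x$ inductive hypothesis are available, the asymptotic analysis is a routine dominated-convergence argument organised around splitting the time integral at $t/2$.
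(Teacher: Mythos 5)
This theorem is quoted from \cite{bmoments, gonzalez2022erratum} and is \emph{not proved} in the present paper; the authors state it and apply it as a black box in the proof of Theorem~\ref{theorem: WLLN}, so there is no in-paper proof to compare against. That said, your sketch --- the first-branch (renewal) decomposition yielding the moment evolution equation with the ``all-mass-to-one-offspring'' terms resummed into $\psi_t[f^\ell]$ via variation of constants, induction on $\ell$, normalisation by $\mathrm{e}^{-\ell\lambda t}$ with the exponential distributed across the multi-index, the identity $\binom{\ell}{k_1,\dots,k_N}\prod_{j:k_j>0}k_j!=\ell!$, telescoping replacement of normalised lower-order moments by their uniform limits, identification of the leading term with the defining integral for $L_\ell$, and control of the tail and error via the $s=t/2$ split --- is a faithful reconstruction of the argument actually carried out in the cited reference, and I see no genuine gap in it.
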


Results of this kind have been extensively used in the subsequent study of non-local branching processes including to establish the classical Yaglom limit in the critical case \cite{Yaglom, Pedro_Yaglom}, characterise their genealogical structure \cite{PSDBP-moments, felix_genealogies} and understand their scaling limits \cite{felix-CRT}. Our main result shows how they can be used to offer an alternative proof for well-known law of large numbers. Moreover, our proof enables one to characterise the limiting random variable in terms of its moments.

\begin{theorem}
\label{theorem: WLLN}
    Assume that \ref{H1} holds and further assume that 
    \begin{equation}\label{eq:H2}
        \sup_{x \in E}\mathcal E_x[2^N] < \infty.
    \end{equation}
    Then, for any $x \in E$ and $f \in B^+(E)$, we have that
    \begin{equation*}
        \e^{-\lambda t }X_t[f] \conindis W_x \tilde\varphi[f], \quad \text{ as } t \to \infty,
    \end{equation*}
under $\mathbb{P}_{\delta_x}$, where \(W_{x}\) is the almost sure limit of ${\rm e}^{-\lambda t}X_t[\varphi]$. Moreover, \(W_x^k\) is determined by its moments which satisfy,
    for each $k \ge 2$, \(\mathbb{E}[W_{x}^k] = k!\varphi(x)L_k(x)\).
\end{theorem}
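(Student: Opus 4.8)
The strategy is to prove convergence in distribution via convergence of moments, using the moment asymptotics quoted above as the essential input. First I would establish that $W_x$ is well defined: since $\e^{-\lambda t}X_t[\varphi]$ is a non-negative martingale under $\mathbb{P}_{\delta_x}$ (this follows from the first identity in \ref{H1}, namely $\psi_t[\varphi] = \e^{\lambda t}\varphi$), it converges almost surely to a limit $W_x \ge 0$. Next I would verify that the hypothesis \eqref{eq:H2} implies \ref{H2k} for every $k \ge 1$, since $\mathcal{E}_x[\mathcal{Z}[1]^k] \le c_k \mathcal{E}_x[2^{\mathcal{Z}[1]}]$ for a constant $c_k$ depending only on $k$; this lets me apply the cited theorem of \cite{bmoments, gonzalez2022erratum} for all $k$ simultaneously.

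The heart of the argument is to show $\mathbb{E}[W_x^k] = k!\varphi(x)L_k(x)$ for each $k$, and that these moments determine the law. For the moment identity, apply the cited theorem with $f = \varphi$: it gives $\e^{-k\lambda t}\psi_t^{(k)}[\varphi](x) \to k!(\tilde\varphi[\varphi])^k \varphi(x)L_k(x) = k!\varphi(x)L_k(x)$, using $\tilde\varphi[\varphi]=1$. So $\mathbb{E}[(\e^{-\lambda t}X_t[\varphi])^k] \to k!\varphi(x)L_k(x)$. Combined with $\e^{-\lambda t}X_t[\varphi] \to W_x$ a.s. and the uniform boundedness $\sup_{t\ge0}\Delta_t^{(k)} < \infty$ — which bounds the $k$-th moments uniformly in $t$, hence gives uniform integrability of any fixed power along the sequence — I can pass the limit inside to conclude $\mathbb{E}[W_x^k] = k!\varphi(x)L_k(x)$ and also convergence of moments of all orders. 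To see the law of $W_x$ is determined by its moments, I would check the Carleman condition $\sum_k \mathbb{E}[W_x^{2k}]^{-1/2k} = \infty$, which amounts to controlling the growth of $L_k(x)$; alternatively one can bound the moment generating function directly. A clean route: the exponential moment hypothesis \eqref{eq:H2} should propagate to show $\mathbb{E}[W_x^k]$ grows no faster than $C^k k!$ for some constant, i.e. $W_x$ has a finite exponential moment, which immediately gives moment determinacy. I expect establishing this growth bound on $L_k(x)$ (equivalently, an exponential moment for $W_x$) to be the main obstacle, as it requires unpacking the recursive definition of $L_k$ and using \eqref{eq:H2} carefully — this is presumably where the specific form of the assumption, rather than merely $\sup_x \mathcal{E}_x[\mathcal{Z}[1]^k]<\infty$ for each $k$, is needed.

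Finally I would upgrade from $f = \varphi$ to general $f \in B^+(E)$. For this I would show $\e^{-\lambda t}X_t[f] - W_x\tilde\varphi[f] \to 0$ in probability, again by a second-moment computation: expand $\mathbb{E}[(\e^{-\lambda t}X_t[f] - \e^{-\lambda t}\tilde\varphi[f] X_t[\varphi])^2]$ using $\psi_t^{(1)}$ and $\psi_t^{(2)}$, and use the $\ell=1,2$ cases of the cited theorem. The first-moment term $\e^{-\lambda t}\psi_t[f](x) \to \tilde\varphi[f]\varphi(x)$ by \ref{H1}, and the cross terms in the second moment are controlled by $\Delta_t^{(2)} \to 0$; the dominant contributions cancel because both $f$ and $\tilde\varphi[f]\varphi$ have the same $\tilde\varphi$-integral, so the leading $2!(\tilde\varphi[\cdot])^2\varphi(x)L_2(x)$ terms match. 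Hence $\e^{-\lambda t}X_t[f] - \e^{-\lambda t}\tilde\varphi[f]X_t[\varphi] \to 0$ in $L^2$, and since $\e^{-\lambda t}\tilde\varphi[f]X_t[\varphi] \to W_x\tilde\varphi[f]$ a.s., Slutsky gives $\e^{-\lambda t}X_t[f] \to W_x\tilde\varphi[f]$ in distribution, completing the proof.
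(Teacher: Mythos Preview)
Your proposal is correct and follows essentially the same approach as the paper: decompose $\e^{-\lambda t}X_t[f]$ into $\tilde\varphi[f]\e^{-\lambda t}X_t[\varphi]$ plus a remainder that vanishes in $L^2$ by the $\ell=2$ moment asymptotics, then establish moment determinacy of $W_x$ via a Carleman-type growth bound on $L_k(x)$ proved by induction through the recursion, with the assumption $\sup_x\mathcal{E}_x[2^N]<\infty$ entering precisely there. One minor calibration: the paper obtains the bound $k!\varphi(x)L_k(x)\le C^{2k-1}(k!)^2$ rather than the stronger $C^k k!$ you anticipate (so an exponential moment for $W_x$ is not actually established), the key combinatorial input being the inverse-multinomial estimate $\sup_{k\ge1}\sum_{[k_1,\dots,k_N]_k}\binom{k}{k_1,\dots,k_N}^{-1}\le C\,2^N$; this weaker bound still suffices for Carleman via Stirling.
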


\bigskip

\begin{remark}
    Note that this result implies that ${\rm e}^{-\lambda t}X_t(dy) \to W_x\tilde\varphi(dy)$ in law, as $t \to \infty$, which follows from the fact that the theorem holds for all non-negative, bounded and continuous functions $f$.
\end{remark}

\bigskip

We also obtain the following corollary by combining \cite[Theorem 5]{bmoments, gonzalez2022erratum} with the above result.

\begin{corollary}\label{cor:WLLN}
Under the assumptions of Theorem \ref{theorem: WLLN}, for any $x \in E$ and $f \in B^+(E)$, we have
\[
{\rm e}^{-\lambda t}\int_0^t X_s[f] \, d s \conindis W_x \tilde\varphi[f], \quad \text{ as } t \to \infty.
\]
\end{corollary}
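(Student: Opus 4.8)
The corollary should follow from a method‑of‑moments argument essentially identical to the one behind Theorem~\ref{theorem: WLLN}, now driven by the moment asymptotics of the \emph{time‑integrated} functional rather than of $X_t$ itself. As a preliminary, note — exactly as in the proof of Theorem~\ref{theorem: WLLN} — that \eqref{eq:H2} implies \ref{H2k} for every $k\ge1$: since $n^k\le c_k2^{n}$ for all integers $n\ge0$ with $c_k:=\sup_{m\ge0}m^k2^{-m}<\infty$, one has $\sup_{x\in E}\mathcal E_x[\mathcal Z[1]^k]\le c_k\sup_{x\in E}\mathcal E_x[2^N]<\infty$. Hence \cite[Theorem~5]{bmoments, gonzalez2022erratum} — the counterpart of \cite[Theorem~2]{bmoments, gonzalez2022erratum} for $\E_{\delta_x}\!\big[\big(\int_0^tX_s[f]\,\d s\big)^k\big]$ — is available for all $k\ge1$, and, by the degree‑$k$ homogeneity of $f\mapsto\big(\int_0^tX_s[f]\,\d s\big)^k$, for all $f\in B^+(E)$ rather than just $f\in B_1^+(E)$.

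Write $I_t:=\e^{-\lambda t}\int_0^tX_s[f]\,\d s$. By \cite[Theorem~5]{bmoments, gonzalez2022erratum}, $\E_{\delta_x}[I_t^k]$ converges as $t\to\infty$, for every $k\ge1$, to an explicit quantity assembled from $\lambda$, $\tilde\varphi[f]$, $\varphi(x)$ and the functions $L_k$. The bookkeeping step I would carry out is to check that this limit is precisely $\E\big[(W_x\tilde\varphi[f])^k\big]$, using the identity $\E[W_x^k]=k!\varphi(x)L_k(x)$ supplied by Theorem~\ref{theorem: WLLN}; for $k=1$ this amounts to the elementary fact that $\e^{-\lambda t}\int_0^t\psi_s[f](x)\,\d s$ converges — up to the normalisation chosen in \cite[Theorem~5]{bmoments, gonzalez2022erratum} — to a multiple of $\varphi(x)\tilde\varphi[f]$, which is immediate from \ref{H1}. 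Once the limiting moments are identified as those of $W_x\tilde\varphi[f]$, the moment sequence $\big(\E\big[(W_x\tilde\varphi[f])^k\big]\big)_{k\ge1}$ determines a unique law, because Theorem~\ref{theorem: WLLN} asserts that $W_x$ is moment‑determinate and hence so is $W_x\tilde\varphi[f]$. Therefore the method of moments yields $I_t\conindis W_x\tilde\varphi[f]$ as $t\to\infty$, which is the assertion.

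The only place that needs genuine care is the constant‑matching in the previous paragraph: reading off from \cite[Theorem~5]{bmoments, gonzalez2022erratum} the exact form of $\lim_{t\to\infty}\E_{\delta_x}[I_t^k]$ and confirming it agrees with $\E[(W_x\tilde\varphi[f])^k]$. Everything downstream — the determinacy of the limiting law and the passage from convergence of all moments to convergence in distribution — is inherited directly from Theorem~\ref{theorem: WLLN}, and no new analysis of the process $X$ is needed. If one prefers not to quote \cite[Theorem~5]{bmoments, gonzalez2022erratum}, a self‑contained alternative is to re‑derive the moment asymptotics of $\int_0^tX_s[f]\,\d s$ directly from the branching property along the lines of \cite[Theorem~2]{bmoments, gonzalez2022erratum}: the time‑integration produces, at each level of the moment recursion, an extra factor $\int_0^t\e^{-\lambda(k-1)s}\,\d s$, which for $k\ge2$ converges and merely reweights the limiting constant, while for $k=1$ it accounts for the $\e^{\lambda t}/\lambda$ growth of $\E_{\delta_x}\!\big[\int_0^tX_s[\varphi]\,\d s\big]$.
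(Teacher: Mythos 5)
Your proposal takes essentially the same route as the paper: the authors give no detailed proof, only the remark that the corollary follows by combining \cite[Theorem~5]{bmoments, gonzalez2022erratum} (the moment asymptotics for the time-integrated functional) with Theorem~\ref{theorem: WLLN}, which is precisely the method-of-moments argument you lay out, including the observation that moment-determinacy of $W_x$ transfers to $W_x\tilde\varphi[f]$. The constant-matching you flag is indeed the only step requiring care when reading off the normalisation in \cite[Theorem~5]{bmoments, gonzalez2022erratum}; otherwise the argument is complete.
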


\bigskip

The main contribution of Theorem \ref{theorem: WLLN} is the fact that the law of \(W_x\) can be completely characterised by its moments. This is notable since these moments have explicit form as stated in \eqref{eq:moments}. Moreover, Theorem \ref{theorem: WLLN} can be generalised to hold under a weaker version of \ref{H1} that assumes the leading eigenvalue is non-simple. In this setting, \(\varphi,\tilde\varphi\) are replaced, respectively, by a collection of bounded eigenfunctions \(\varphi_1,\dots,\varphi_n\in B(E)\) and bounded functionals \(\tilde\varphi_1,\dots,\tilde\varphi_n:B(E)\rightarrow \mathbb{C}\), such that
\[
  \sup_{x \in E, f \in B_1(E)}\big| {\rm e}^{-\lambda t}\psi_t[f](x) - \sum_{i=1}^{k}\tilde\varphi_i[f]\varphi_i(x)\big|
  \to 0, \quad \text{as } t \to \infty.
\]
Again, the moments of \(W_x\) are given explicitly (see \cite{Moments_Chris}). Furthermore, this setting appears in many non-trivial cases (see \cite{Moments_Chris} for examples).

We also briefly comment on the moment assumption \eqref{eq:H2}. While this assumption is quite strong, it is satisfied in several natural examples including growth-fragmentation processes with binary splitting \cite{GF}, neutron transport \cite{NTEbook}.

In the next section we discuss some preliminary results, including the so-called Stieltjes moment problem and some basic theory on the convergence of random measures. Then we present the proof of Theorem \ref{theorem: WLLN}.

\section{Preliminaries}\label{sec:prelims}
\subsection{Moment convergence and the Stieltjes moment problem}\label{subsec:Stieltjes} 
Recall that in the ``Stieltjes moment problem'' one considers non-negative random variables, that is, probability measures on a half-line $[0,\infty)$.
For the moments $s_k:=E[X^k]$ to determine the distribution of $X$ uniquely, it is sufficient that 
\begin{align}\label{eq: Carleman}
\sum_{k=1}^{\infty}s_k^{-1/(2k)}=+\infty.
\end{align}
The condition in \eqref{eq: Carleman} is called the {\it Carleman condition}. Various existing results give simpler criteria that imply \eqref{eq: Carleman}, usually involving bounds for $s_k$, or asymptotics as $k \to \infty$. See, for example, \cite{Lin.moments, Schmudigen.book}. In what follows we will show that there exists $C > 0$ such that 
\[
 s_k \le C^{2k-1}(k!)^2, \quad k \ge 1,
\]
where in our case, $s_k = k!\sup_{x \in E}\varphi(x)L_k(x)$. This and Stirling's approximation then imply that \eqref{eq: Carleman} holds.


Regarding the convergence of moments we recall the following result.
\begin{theorem}[Theorem 8.6. in \cite{Gut.book}]
Let $X$ and $X_1, X_2, \dots$ be random variables with finite moments of all orders, and suppose that
$E|X_n|^k \to E|X|^k$ as $n \to \infty$, for $k \ge 1$.
If the moments of $X$ determine the distribution of $X$ uniquely, then $X_n\to X$ in law as $n \to \infty.$
\end{theorem}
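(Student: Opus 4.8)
The plan is a moment method. I would (i) realise $W_x$ as the almost sure limit of the classical additive martingale; (ii) read off the limiting moments of $\e^{-\lambda t}X_t[f]$ from the moment asymptotics of \cite[Theorem~2]{bmoments, gonzalez2022erratum}; (iii) show that these moments grow slowly enough to satisfy the Carleman condition \eqref{eq: Carleman}, and hence determine a unique law; and (iv) conclude the stated convergence in distribution via the moment-convergence theorem recalled above (Theorem~8.6 of \cite{Gut.book}). Note at the outset that $2^{n}\ge c_k^{-1}n^{k}$ for each $k$ and all $n\in\NN$, so \eqref{eq:H2} implies \ref{H2k} for every $k\ge1$; thus the quoted moment asymptotics are available for all orders.

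For (i)--(ii): under \ref{H1} the process $M_t:=\e^{-\lambda t}X_t[\varphi]$ is a non-negative $\PP_{\delta_x}$-martingale, since conditioning on the natural filtration $\mathcal F_t$ and using the branching property with $\psi_s[\varphi]=\e^{\lambda s}\varphi$ gives $\E_{\delta_x}[M_{t+s}\mid\mathcal F_t]=\e^{-\lambda(t+s)}\sum_{i\le N_t}\psi_s[\varphi](x_i(t))=M_t$; hence $M_t\to W_x$ almost surely for some $W_x\ge0$, the random variable in the statement. For $f\in B^+(E)$ with $f\not\equiv0$, writing $g:=f/\|f\|_\infty\in B_1^+(E)$ and using $X_t[f]^k=\|f\|_\infty^k X_t[g]^k$ together with $\tilde\varphi[f]=\|f\|_\infty\tilde\varphi[g]$, the quoted theorem gives, for each $k\ge1$,
\[
\E_{\delta_x}\big[(\e^{-\lambda t}X_t[f])^k\big]=\e^{-k\lambda t}\psi_t^{(k)}[f](x)\ \longrightarrow\ k!\,(\tilde\varphi[f])^k\varphi(x)L_k(x)=:m_k(f),\qquad t\to\infty,
\]
and moreover $\sup_{t\ge0}\E_{\delta_x}[(\e^{-\lambda t}X_t[f])^k]<\infty$ (this is $\sup_t\Delta_t^{(k)}<\infty$, up to harmless constants). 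Taking $f=\varphi$ and using $\tilde\varphi[\varphi]=1$: since $M_t\to W_x$ almost surely and $\{M_t^k\}_{t\ge0}$ is uniformly integrable (because $\sup_t\E_{\delta_x}[M_t^{k+1}]<\infty$ by the last display with $k+1$ in place of $k$), I obtain $\E[W_x^k]=k!\varphi(x)L_k(x)$ for every $k\ge1$; in particular $\E[W_x]=M_0=\varphi(x)$, so $W_x$ is non-degenerate, and $m_k(f)=(\tilde\varphi[f])^k\E[W_x^k]=\E[(\tilde\varphi[f]W_x)^k]$.

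For (iii): each $s_k:=k!\sup_{x\in E}\varphi(x)L_k(x)$ is finite (again by $\sup_t\Delta_t^{(k)}<\infty$), and the content is the growth bound
\[
s_k\le C^{2k-1}(k!)^2,\qquad k\ge1,
\]
for a constant $C$ depending only on $\lambda$, $\|\gamma\|_\infty$, $\sup_{x\in E}\mathcal E_x[2^N]$ and $\|\varphi\|_\infty$. I would prove this by strong induction on $k$ from the recursion defining $L_k$: in the inductive step, bound $\gamma\le\|\gamma\|_\infty$, use $\varphi(x)^{-1}\psi_s[1](x)\le(\mathrm{const})\e^{\lambda s}$ (a consequence of \ref{H1} applied to the constant function), substitute $\varphi(x_j)L_{k_j}(x_j)\le C^{2k_j-1}k_j!$ into the product, and estimate the sum over the tuples in $[k_1,\dots,k_N]_k^2$ by grouping them according to the number $m\le N\wedge k$ of positive entries, producing a sum of the form $\sum_{m}\binom{N}{m}(\,\cdot\,)^m$ whose expectation under $\mathcal E_x$ is finite precisely because of \eqref{eq:H2} — this is where the strong $\mathcal E_x[2^N]$-assumption, rather than merely polynomial moments of $N$, is essential — while the factor $\int_0^\infty\e^{-(k-1)\lambda s}\,\d s=(\lambda(k-1))^{-1}$ coming from the time integral supplies the decay in $k$ that closes the induction once $C$ is large enough. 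Granted this bound, Stirling's formula gives $s_k^{-1/(2k)}\ge(\mathrm{const})(k!)^{-1/k}\ge(\mathrm{const})/k$, so $\sum_k s_k^{-1/(2k)}=+\infty$; the same holds for $(m_k(f))_k$, since $m_k(f)\le(\tilde\varphi[f])^k s_k$ (and if $\tilde\varphi[f]=0$ then $m_k(f)\equiv0$ and the limit is degenerate at $0$). By the Stieltjes moment problem, the sequences $(k!\varphi(x)L_k(x))_k$ and $(m_k(f))_k$ therefore each determine a unique law, necessarily that of $W_x$, respectively of $\tilde\varphi[f]W_x$.

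For (iv): apply the moment-convergence theorem with $X_n:=\e^{-\lambda t_n}X_{t_n}[f]$ along an arbitrary sequence $t_n\to\infty$ and $X:=\tilde\varphi[f]W_x$ (all these variables are non-negative, so the absolute values are immaterial); the convergence of all moments from step (ii), together with the moment-determinacy from step (iii), yields $\e^{-\lambda t}X_t[f]\conindis W_x\tilde\varphi[f]$ under $\PP_{\delta_x}$, which is the assertion, while the identity $\E[W_x^k]=k!\varphi(x)L_k(x)$ is step (ii); the case $f\equiv0$ is trivial. The main obstacle is step (iii): the combinatorial estimate must be organised so that the exponential-in-$N$ count of offspring configurations is absorbed exactly by \eqref{eq:H2} and so that the gain from the time integral keeps the constant $C$ uniform in $k$; the remaining steps are essentially bookkeeping.
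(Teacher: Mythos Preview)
Your proposal does not address the stated theorem. The statement you were asked to prove is the classical moment-convergence criterion (Theorem~8.6 of \cite{Gut.book}): if $\E|X_n|^k\to\E|X|^k$ for all $k$ and the law of $X$ is determined by its moments, then $X_n\to X$ in law. The paper does not prove this; it is quoted without proof in Section~\ref{subsec:Stieltjes} as a tool. What you have written is instead an outline of the paper's \emph{main} result, Theorem~\ref{theorem: WLLN}, in which this cited theorem is invoked as a black box in your step~(iv). There is therefore nothing to compare: the paper contains no proof of the stated theorem, and your proposal does not attempt one either. A proof of the actual statement would proceed by showing tightness of $(X_n)$ from the uniform second-moment bound, extracting convergent subsequences, identifying the moments of any subsequential limit with those of $X$ via uniform integrability, and invoking moment-determinacy.

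If your intended target was Theorem~\ref{theorem: WLLN}, your outline is close in spirit to the paper's: both reduce to the Carleman bound $s_k\le C^{2k-1}(k!)^2$ by induction on the recursion for $L_k$ and then appeal to the cited moment-convergence theorem. The paper first decomposes $\e^{-\lambda t}X_t[f]=\tilde\varphi[f]\,\e^{-\lambda t}X_t[\varphi]+\e^{-\lambda t}X_t[f-\tilde\varphi[f]\varphi]$ and sends the remainder to $0$ in $L^2$, whereas you work with general $f$ directly; both are fine. The real gap is in your step~(iii): after substituting the inductive bound, the sum over $[k_1,\dots,k_N]_k^2$ is not of the form $\sum_m\binom{N}{m}(\cdot)^m$ with a constant inner factor --- what actually appears inside is $\sum_{[k_1,\dots,k_m]_{k,1}}\prod_j k_j!$, and to absorb this against $\mathcal E_x[2^N]$ you need the uniform bound $\sup_{k,m}\sum_{[k_1,\dots,k_m]_{k,1}}\binom{k}{k_1,\dots,k_m}^{-1}<\infty$. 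This is precisely the content of the paper's Lemma~\ref{lem:inversemulti}, proved there by a non-trivial case split on whether at least three of the $k_j$ exceed $1$; your sketch asserts the conclusion but skips this key estimate, and without it the induction does not close with a $k$-independent constant.
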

If one only knows that $s_k:=\lim_{n\to\infty}E|X_n|^k$ exists but not a priori that they correspond to a probability distribution, then it may seem that one needs to establish in addition that the laws of the $X_n$ are tight. However, this comes for free, because by
the Markov inequality, for tightness it is sufficient to check for instance, that
$$\sup_{n\ge 1} E(X_n^2)<\infty,$$
and this we already know as $\lim_{n\to\infty}E|X_n|^2=s_2.$

\subsection{Random measures}\label{subsec:randommeasures}
Below, we give a short summary on the convergence of random measures -- for a reference see \cite{Kallenberg.book}.

(i) Let $\mathcal{X}$ be a Polish space (complete separable metric space), let $C_c(\mathcal{X})$ denote the space of continuous functions with compact support and let $\mathcal{M}_{\mathcal{X}}$ denote the space of finite measures on $\mathcal X$, equipped with the vague topology. This space is  then metrizable with a metric 
 $\rho_{\mathsf{vag}}$ and $(\mathcal{M}_{\mathcal{X}},\rho_{\mathsf{vag}})$ is Polish too. Now consider $X,X_0,X_1,X_2,\dots$ where $X$ and $X_i$ are $\mathcal{M}_{\mathcal{X}}$-valued random elements (random measures). By definition,
 $X_n\to X$ as $n\to\infty$ in law means that for each bounded and continuous map
 $$F:(\mathcal{M}_{\mathcal{X}},\rho_{\mathsf{vag}})\to (\mathbb R,\mathsf{d}),$$ one has $\lim_{n\to\infty}\mathbf{E}(F(X_n))=\mathbf{E}(F(X)).$ (Here $\mathsf{d}$ is the Euclidean metric.) Since 
 $F_{f}(\nu):=\exp\{-\nu(f)\}$ with some $f\in C_c(\mathcal{X})$ is one such map, 
 $X_n\to X$ as $n\to\infty$ in law implies that
 \begin{align}\label{eq: Laplace.tr}
\lim_n \mathcal{L}_{X_{n}}(f):=
\lim_n\mathbf{E}[\exp\{-X_{n}(f)\}]= \mathbf{E}[\exp\{-X(f)\}]=:\mathcal{L}_{X}(f), \ f\in C_c(\mathcal{X}).
 \end{align}
 It is well known though that \eqref{eq: Laplace.tr} is not just necessary but also sufficient for concluding that
 $X_n\to X$ as $n\to\infty$ in law. (Here $\mathcal{L}_{X}$ is the {\it Laplace functional} of $X$.)

 (ii) Next, one can modify the above discussion by  letting $C_b(\mathcal{X})$ denote the space of bounded continuous functions and considering $\mathcal{M}_{\mathcal{X}}$ being equipped with the {\it weak} topology. Then this space is also metrizable with a metric $\rho_{\mathsf{pro}}$ (Prokhorov metric) and $(\mathcal{M}_{\mathcal{X}},\rho_{\mathsf{pro}})$ is Polish too.
 In this case, $X_n\to X$ as $n\to\infty$ in law means that for each bounded and continuous map
 $F:(\mathcal{M}_{\mathcal{X}},\rho_{\mathsf{pro}})\to (\mathbb R,d),$ one has $\lim_{n\to\infty}\mathbf{E}(F(X_n))=\mathbf{E}(F(X)).$
 Similarly to the case of vague topology, it is  now clear that $X_n\to X$  in law implies that
 \begin{align}\label{eq: Laplace.tr.weak}
\lim_n \mathcal{L}_{X_{n}}(f)=\mathcal{L}_{X}(f), \ f\in C_b(\mathcal{X}),
 \end{align}
 and again, it is known that \eqref{eq: Laplace.tr.weak} is in fact not just a necessary but also a sufficient condition for the convergence in law.
 
\section{Proof of Theorem \ref{theorem: WLLN}}
The general strategy of the proof is to use the fact that convergence of moments is sufficient if the limit is determined uniquely by its moments, as discussed in Section \ref{subsec:Stieltjes}. Now, note that for any \(f\in B^+(E)\), we have that
\begin{equation*}
         \e^{-\lambda t}X_t[f] =\tilde \varphi[f]\e^{-\lambda t}X_t[\varphi]+\e^{-\lambda t}X_t[f-\tilde\varphi[f]\varphi].
     \end{equation*}
     Since \(\tilde \varphi[\varphi]=1\), we have that \(\tilde\varphi[f-\tilde\varphi[f]\varphi]=0\) and thus the limit appearing in \eqref{eq:moments} is equal to $0$ for any $k \ge 1$. In particular, taking $\ell=2$, this implies the second term converges in \(L^2\) to 0 as \(t\rightarrow \infty\), hence it remains to show convergence of \(\e^{-\lambda t}X_t[\varphi]\). Since convergence of moments is already given in \cite{bmoments}, it remains to check that the limit \(W_{x}\) is uniquely determined by its moments. As such, we will show that there exists a constant \(C > 0\), such that
    \begin{equation}
    \label{eq:1}
        k! \sup_{x \in E}\varphi(x)L_k(x) \leq C^{2k-1} (k!)^2 \quad \Leftrightarrow \quad \sup_{x \in E}\varphi(x)L_k(x) \leq C^{2k-1}k! .
    \end{equation}
    We prove this using induction. 
    
    The case \(k=1\) is immediate by taking \(C = \Vert \varphi\Vert < \infty\). For \(k\geq 2\), assume that, for each \(1\leq \ell <k\), \eqref{eq:1} holds. Note that,
    \begin{align*}
        \varphi(x)L_k(x) &\leq \sup_{x \in E}\mathcal{E}_{x}\F[\sum_{[k_1,\dots,k_N]^2_k}\prod_{j : k_j > 0}\varphi(x_j)L_{k_j}(x_j)\R]\int_0^{\infty}\e^{-k\lambda s}\psi_s[\gamma](x)\mathrm{d}s \\
        &\leq C_1\sup_{x \in E}\mathcal{E}_{x}\F[\sum_{[k_1,\dots,k_N]^2_k}\prod_{j: k_j > 0}\varphi(x_j)L_{k_j}(x_j)\R]\\
        &\leq C_1C^{2k-2}k!\sup_{x \in E}\mathcal{E}_{x}\F[\sum_{[k_1,\dots,k_N]^2_k}{k \choose k_1, \dots, k_N}^{-1}\R],
    \end{align*} 
    where, since $\lambda>0$, the second inequality is due to \ref{H1}, and the final inequality is due to the inductive hypothesis and that at least two of the \(k_j\) are strictly positive. Importantly, \(C_1\) does not depend on \(k\). To complete the proof, we use the following lemma.
    \begin{lemma}\label{lem:inversemulti}
        There exists a constant \(C>0\) such that
        \begin{equation*}
        \sup_{k\geq 1}\sum_{[k_1,\dots,k_N]_{k}}{k\choose k_1,\dots,k_N}^{-1} \leq C2^N, \quad N\geq 1.
        \end{equation*}
    \end{lemma}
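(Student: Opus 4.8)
The plan is to reduce the bound to a uniform estimate, in both variables, on a quantity that no longer involves $N$. First I would sort the sum over all nonnegative $N$-tuples $(k_1,\dots,k_N)$ with $\sum_i k_i=k$ according to the number $r$ of strictly positive coordinates. There are $\binom{N}{r}$ ways to choose the positions of these coordinates, and each zero coordinate contributes a factor $0!=1$ to the multinomial coefficient, so
\[
\sum_{[k_1,\dots,k_N]_k}\binom{k}{k_1,\dots,k_N}^{-1}=\sum_{r=1}^{\min(N,k)}\binom{N}{r}\,T(r,k),\qquad T(r,k):=\sum_{\substack{a_1+\cdots+a_r=k\\ a_1,\dots,a_r\geq1}}\frac{a_1!\cdots a_r!}{k!}.
\]
Since $T(r,k)$ does not depend on $N$, it suffices to show that $\sup_{k\geq1,\,1\leq r\leq k}T(r,k)<\infty$: any bound $T(r,k)\leq C$ yields $\sum_{[k_1,\dots,k_N]_k}\binom{k}{k_1,\dots,k_N}^{-1}\leq C\sum_{r=1}^{N}\binom{N}{r}\leq C\,2^N$.

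Next I would dominate $T(r,k)$ by the analogous sum over \emph{all} ordered compositions of $k$,
\[
c_k:=\sum_{r\geq1}T(r,k)=\frac{1}{k!}\sum_{\substack{r\geq1,\ a_1,\dots,a_r\geq1\\ a_1+\cdots+a_r=k}}a_1!\cdots a_r!,
\]
which clearly satisfies $T(r,k)\leq c_k$ for every $r$. Conditioning on the value $j$ of the first part $a_1$ of a composition of $k$ gives the recursion $c_k=\sum_{j=1}^{k}\binom{k}{j}^{-1}c_{k-j}$ with $c_0:=1$; re-indexing by $m=k-j$ and using $\binom{k}{k-m}=\binom{k}{m}$ this becomes
\[
c_k=\sum_{m=0}^{k-1}\binom{k}{m}^{-1}c_m=1+\sum_{m=1}^{k-1}\binom{k}{m}^{-1}c_m,\qquad k\geq1.
\]

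Finally I would prove $c_k\leq3$ for all $k$ by strong induction. The key point is that the $m=0$ term equals $c_0=1$, while the remaining coefficients have uniformly small total: a short computation shows $\sum_{m=1}^{k-1}\binom{k}{m}^{-1}\leq\tfrac23$ for every $k\geq2$, with equality at $k=3,4$ (the endpoint terms $m\in\{1,k-1\}$ contribute $O(1/k)$ and each interior term is at most $2/(k(k-1))$). Hence, assuming $c_m\leq3$ for all $m<k$,
\[
c_k\leq1+3\sum_{m=1}^{k-1}\binom{k}{m}^{-1}\leq1+3\cdot\tfrac23=3,
\]
and the base cases $c_0=c_1=1$ close the induction. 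Consequently $T(r,k)\leq c_k\leq3$ for all $r,k$, and the lemma holds with $C=3$.

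The step I expect to be the main obstacle is fixing the constant in this last induction. The crude estimate $c_k\leq\big(\max_{m<k}c_m\big)\sum_{m=0}^{k-1}\binom{k}{m}^{-1}$ is not self-improving, because $\sum_{m=0}^{k}\binom{k}{m}^{-1}$ climbs to $8/3>2$; one has to separate the $m=0$ contribution (which carries no decaying binomial factor) from the genuinely small tail $\sum_{m=1}^{k-1}\binom{k}{m}^{-1}\leq\tfrac23$ before the recursion becomes a contraction and the induction closes.
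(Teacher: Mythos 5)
Your proof is correct, and it diverges from the paper's at the decisive step. The opening reduction is the same in both: partition the nonnegative $N$-tuples by which coordinates vanish, pull out the factor $\sum_{r}\binom{N}{r}\le 2^N$, and reduce to a uniform-in-$(r,k)$ bound on $T(r,k)=\sum_{a_1+\cdots+a_r=k,\,a_i\ge1}\binom{k}{a_1,\dots,a_r}^{-1}$ (this is exactly the paper's claim \eqref{eq:step1}). Where you part ways is in how this uniform bound is established. The paper splits the index set according to how many of the $a_j$ exceed $1$, bounds each summand by the most unbalanced admissible tuple (all but three entries equal to $1$, two of the remaining equal to $2$), and multiplies by a crude count of the number of tuples; the result is an unnamed constant obtained by pure counting. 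You instead majorize $T(r,k)$ by $c_k$, the corresponding sum over \emph{all} compositions of $k$, derive the clean first-part recursion $c_k=\sum_{m=0}^{k-1}\binom{k}{m}^{-1}c_m$, and close by strong induction to get the explicit bound $c_k\le3$. The observation that makes the induction close — split off the $m=0$ term, which contributes exactly $1$ with no decaying binomial factor, and then use $\sum_{m=1}^{k-1}\binom{k}{m}^{-1}\le 2/3$ for $k\ge2$ — is correctly identified and correct (equality holds at $k=3,4$, and for $k\ge5$ the tail is dominated by $2/k+2(k-3)/(k(k-1))$). Your approach is arguably cleaner and buys an explicit constant; the paper's is shorter on paper but leaves the constant implicit and requires a little care about the tuple counts. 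One cosmetic slip in your closing remark: the recursion involves $\sum_{m=0}^{k-1}\binom{k}{m}^{-1}$, which peaks at $5/3$, not the full sum $\sum_{m=0}^{k}\binom{k}{m}^{-1}$, which is what peaks at $8/3$; the point — that $5/3>1$ so the naive estimate is not a contraction and the $m=0$ term must be isolated — still stands.
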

    This, along with \eqref{eq:H2}, then implies that 
\begin{equation}
\label{eq:2}
\sup_{k\geq 2}\sup_{x \in E}\mathcal{E}_{x}\F[\sum_{[k_1,\dots,k_N]^2_k}{k\choose k_1,\dots,k_N}^{-1}\R] \leq C_2,
    \end{equation}
    which yields \eqref{eq:1} with \(C \ge C_1C_2\). 
    
    \begin{proof}[Proof of Lemma \ref{lem:inversemulti}]
       For \(N,k\geq 1\), define $[k_1, \dots, k_N]_{k, 1}$ to denote the set of $N$-tuples $(k_1, \dots, k_N)$ such that $\sum_{j = 1}^Nk_j = k$ and all the $k_j$ are greater than or equal to $1$. We first claim that there exists a constant \(C>0\) such that
        \begin{equation}\label{eq:step1}
            \sup_{N\geq 1}\sup_{k\geq 1}\sum_{[k_1, \dots,k_N]_{k, 1}}{k\choose k_1,\dots,k_N}^{-1} \leq C.
        \end{equation}
        Let \([k_1, \dots,k_N]_{k, 1}^3\) denote the subset of \([k_1, \dots,k_N]_{k, 1}\) such that at least 3 of the \(k_j\) are greater than 1. Then, 
        \begin{equation}
        \label{eq: lemma bound 2}
            \sup_{N\geq 1}\sup_{k\geq 1}\sum_{[k_1, \dots,k_N]_{k, 1}^3}{k\choose k_1,\dots,k_N}^{-1}\leq   \sup_{N\geq 1}\sup_{k\geq 1}\sum_{[k_1, \dots,k_N]_{k, 1}^3}\frac{2^2(k-N-1)!}{k!} \leq 4,
        \end{equation}
        where we have obtained the first inequality by maximising the summands by taking all but three of the $k_i = 1$, and two of the remaining three \(k_i=2\), and the second inequality by noting that the number of summands is bounded by $k!/(k - N-1)!$. Next note that for all \(k\geq 2\), \(|[k_1, \dots,k_N]_{k, 1}\setminus[k_1, \dots,k_N]_{k, 1}^3|\leq 3kN^2\), since at most \(2\) of the \(k_i\) may differ from \(1\) and fixing one forces the choice of the other. Therefore, by a similar argument to \eqref{eq: lemma bound 2}, there exists a constant \(C>0\) such that
        \begin{equation*}
             \sup_{N\geq 1}\sup_{k\geq 1}\sum_{[k_1, \dots,k_N]_{k, 1}\setminus[k_1, \dots,k_N]_{k, 1}^3}{k\choose k_1,\dots,k_N}^{-1}\leq   \sup_{N\geq 1}\sup_{k\geq 1}\sum_{[k_1, \dots,k_N]_{k, 1}\setminus[k_1, \dots,k_N]_{k, 1}^3}\frac{(k-N+1)!}{k!} \leq C,
        \end{equation*}
 where in the final inequality we have used that the sum only has a non-zero number of summands when \(k\geq N\), and that the summand is \(o(k^{-1}N^{-2})\) as \(N\rightarrow \infty\) uniformly in \(k\geq N\). This and \eqref{eq: lemma bound 2} give \eqref{eq:step1}. Now, consider the partition of \([k_1,\dots,k_N]_{k}\) into the sets \([k_1,\dots,k_N]_{k,A},\) \(A\subseteq \{1,...,N\}\), where \([k_1,\dots,k_N]_{k,A}\) consists of elements that satisfy \(k_i=0 \) for \(i \in A\) and \(k_i> 0\) for \(i \notin A\). Using this partition and \eqref{eq:step1}, we conclude the proof with the following bound
 \begin{align*}
          \sup_{k\geq 2}\sum_{[k_1,\dots,k_N]_{k} }{k\choose k_1,\dots,k_N}^{-1}&= \sup_{k\geq 2}\sum_{A\subseteq \{1,...,N\}}\sum_{[k_1,\dots,k_N]_{k,A} }{k\choose k_1,\dots,k_N}^{-1}\\
          &=  \sum_{\alpha = 0}^{N}{N\choose\alpha}\sup_{k\geq 2}\sum_{[k_1, \dots, k_{N-\alpha}]_{k,1}} {k\choose k_1,\dots,k_{N-\alpha}}^{-1} \\
          & \le C\sum_{\alpha = 0}^{N}{N\choose\alpha}\leq 2^N,
        \end{align*} where in the second equality we use that the summand only depends on \(A\) through \(|A|\).
    \end{proof}
\section*{Acknowledgements}
EH and CBCD acknowledge the support of EPSRC grant MaThRad EP/W026899/2. 

\bibliographystyle{plain}
\bibliography{bibbp.bib}

\end{document}